\newsavebox{\savepar}
  \newtheorem{theorem}{Theorem}[section]
  \theoremstyle{definition}
\newtheorem{definition}[theorem]{Definition}
  \theoremstyle{remark}
  \newtheorem{remark}[theorem]{Remark}
\theoremstyle{definition}
\theoremstyle{remark}
\begin{document}

\newcommand{\norm}[1]{\left\lVert #1\right\rVert}
\newcommand{\namelistlabel}[1]{\mbox{#1}\hfil}
\newenvironment{namelist}[1]{%
\begin{list}{}
{
\let\makelabel\namelistlabel
\settowidth{\labelwidth}{#1}
\setlength{\leftmargin}{1.1\labelwidth}
}
}{%
\end{list}}

\newcommand{\inp}[2]{\langle {#1} ,\,{#2} \rangle}
\newcommand{\vspan}[1]{{{\rm\,span}\{ #1 \}}}
\newcommand{\R} {{\mathbb{R}}}

\newcommand{\B} {{\mathbb{B}}}
\newcommand{\C} {{\mathbb{C}}}
\newcommand{\N} {{\mathbb{N}}}
\newcommand{\Q} {{\mathbb{Q}}}
\newcommand{\LL} {{\mathbb{L}}}
\newcommand{\Z} {{\mathbb{Z}}}

\newcommand{\BB} {{\mathcal{B}}}

\title{Joint spectrum  of matrix and operator tuples on spaces over bi complex numbers   }
\author{ Akshay S. RANE \footnote{Department of Mathematics, Institute of Chemical Technology, Nathalal Parekh Marg, Matunga, Mumbai 400 019, India, email :  as.rane@ictmumbai.edu.in,} 
\hspace {1mm}
}
\date{ }
\maketitle
\begin{abstract}
In this paper, we generalize the notion of joint eigenvalues and joint spectrum of matrices and operator tupples on a
bi complex Hilbert space. We observe that unlike the spectrum of a  bounded operator on a bi complex Hilbert space is bounded. But this is not the case here.
Even the set of joint eigenvalues of matrix tuples is unbounded. 
\end{abstract}

\noindent
Key Words : joint eigenvalues, joint spectrum, matrix tuple and Bi complex Hilbert space

\smallskip
\noindent
AMS  subject classification : 46B20, 46C05, 46C15,46B99,46C99

\newpage
\section{INTRODUCTION}
The set of bi complex numbers is defined as $$ \mathbb{B C}= \lbrace Z =z_{1} +z_{2}j : z_{1},z_{2} \in \mathbb{C}(i) \rbrace $$ where $i$,$j$ are imaginary units. The major work with bi complex numbers is done in \cite{ALSS}. The product of these imaginary units gives us a hyperbolic unit $k$ such that $i.j =k$ and $k^{2}=1.$ The set of bi complex numbers forms a ring. Bicomplex numbers are just like quaternions which is the generalization of complex numbers by means of entities specified by four real numbers. But the difference is, quaternions is a non commutative ring and it is a division algebra where as set of bicomplex numbers is commutative ring which is not division algebra. Since we have two imaginary units and 1 hyperbolic unit, we have three conjugates.
\begin{enumerate}
	\item $\overline{Z}= \overline{z_{1}}+\overline{z_{2}}j$  (\textbf{bar-conjugation}).
	\item $Z^{\dagger}= z_{1}-z_{2}j$  (\textbf{$\dagger$ - conjugation}).
	\item $Z^{*}= \overline{z_{1}}-\overline{z_{2}}j$  (\textbf{*- conjugation}).
\end{enumerate}a
So the set of bi complex numbers have zero divisors. There are two very special zero divisors $e_{1}$ and $e_{2}$ which are $$ e_{1} =\frac{1+k}{2}$$ and $$e_{2}=\frac{1-k}{2}.$$ Using these zero divisors any bi complex number $Z$ can be expressed as $$ Z= \beta_{1} e_{1} + \beta_{2}e_{2}$$ where $\beta_{1} =z_{1}-z_{2}i$ and $\beta_{2} = z_{1} + z_{2} i.$ 
\noindent
So this gives us the following Idempotent decomposition of set of b icomplex numbers :
$$ \mathbb{ B C} = \mathbb{C}(i) {e_{1}} + \mathbb{C}(i){e_{2}}. $$
 We now define the set of hyperbolic numbers.
The set $\mathbb{D}$ of hyperbolic numbers is defined as $$ \mathbb{D} = \lbrace h = h_{1} + k h_{2} | h_{1},h_{2} \in \mathbb{R} \rbrace.$$
The set of all positive hyperbolic numbers is defined as $$ \mathbb{D^{+}} =\lbrace h =e_{1}a_{1}+ e_{2}a_{2} | a_{1},a_{2} \geq 0 \rbrace.$$
For $\alpha_{1} ,\alpha_{2} \in \mathbb{D}$, we have $ \alpha_{1} < \alpha_{2}$ whenever $\alpha_{2}-\alpha_{1} \in \mathbb{D^{+}}.$ 
This relation is reflexive, transitive, antisymmetric and so it defines partial order.
For bi complex numbers we have two norms , first is Euclidean norm $$ |Z| = \sqrt{x_{1}^{2} + x_{2}^{2} + y_{1}^{2} + y_{2}^{2}} = \sqrt{|z_{1}|^{2} + |z_{2}|^{2}}.$$ The second is hyperbolic norm or $\mathbb{D}$ valued norm.The map $$ |.|_{k} : \mathbb{B C} \rightarrow \mathbb{D^{+}} $$ with following properties : 
\begin{itemize}
	\item $|Z|_{k} =0$ if and only if Z=0.
	\item $|Z.W|_{k} = |Z|_{k} .|W|_{k}$ for any $Z, W \in \mathbb{ B C}.$
	\item $|Z +W|_{k} < |Z|_{k} +|W|_{k}.$
\end{itemize} 
A bi complex module X is said to be $F -\mathbb{B C}$ module if X is complete hyperbolic normed linear space.
Now we define $\mathbb{B C}$ linear operator. Let X be a $\mathbb{B C}$ module. Let $T : X \rightarrow X$ be a map. It is said to be $\mathbb{ B C}$ linear operator if 
\begin{itemize}
	\item $T(x + y)= T(x) + T(y)$
	\item $T(\alpha x)= \alpha T(x)$
\end{itemize}
for all $x,y \in X$ and for all $\alpha \in \mathbb{B C}.$The bi complex inner product is defined in \cite{ALSS}.
Let $X$ be $\mathbb{B} \mathbb{C}$ module. A map $$ \langle.,. \rangle : X \times X \rightarrow \mathbb{B} \mathbb{C}$$ is said to be a $\mathbb{B} \mathbb{C}$ inner product if it satisfies following properties for all $x,y,z \in X$ and $\alpha \in \mathbb{B} \mathbb{C}$:
\begin{enumerate}
	\item $\langle x,y+z \rangle $ = $\langle x,y \rangle  + \langle x+z \rangle $
	\item $\langle \alpha x,y \rangle $ = $\alpha \langle x,y \rangle  $
	\item $\langle x,y \rangle $= $\langle y,x \rangle ^{*}$
	\item $\langle x,x \rangle  \in \mathbb{D^{+}}$ and $\langle x,x \rangle =0$ if and only if $x=0.$
\end{enumerate}
A $ \mathbb{B C}$ inner product module $X$ is said to be a bicomplex Hilbert module if $X$ is complete with respect to the hyperbolic norm generated by the inner product . 
\begin{definition}
	Let $H_1$ and $H_2$ be two bi complex Hilbert spaces. Then the bicomplex adjoint operator $T^*: H_2 \rightarrow H_1$ for a bounded operator $$T^*: H_2 \rightarrow H_1$$ is defined by
	the equality $$ \langle Tx,y \rangle = \langle x , T^* y \rangle .$$ 
\end{definition}
Note the Bicomplex adjoint $T^*$ can also be represented by $$ T^* = e_1 T_1^* + e_2 T_2^*. $$ The notion of eigen values, spectrum of a bounded linear operators in context of modules over bi complex numbers is found in \cite{ALSS} and \cite{CSS}. The functional calculus is also developed here. It is observed that unlike the usual spectral theory over complex numbers, in this case the spectrum of a bounded linear operator is not bounded subset of $\mathbb{C}.$ The idea of joint spectrum of operator tuple was introduced in Taylor. The spectrum is non empty provided the the operator tuple commutes This definition involves the idea of Kozul complex. Vascilescu \cite{Vas}.  gave a characterization of the spectrum of commuting pair of operator tuple similar to that of the usual operator case. In this case also the joint spectrum is a bounded subset of cartesian product of copies of $\mathbb{C}.$  The joint eigenvalues and the spectral radius formula for matrix tuple was dealt in Bhatia \cite{Bhatia}. In this paper we propose the definition of joint eigenvalue of matrix tuple. We observe that the joint eigenvalues of matrix tuple is not a bounded subset of $\mathbb{BC}.$ We also generalize the paper of Vascilescu \cite{Vas} and propose the definition of joint spectrum of a pair of commuting operator tuple. We define the notion of joint approximate spectrum, residual spectrum and
joint point spectrum of a pair of operator tuple. These quantities were defined in Dash \cite{Dash} for usual pair of operator tuples. We observe that the joint approximate spectrum is bounded whereas the other parts are un bounded. 
\section{Joint Eigenvalue of matrices} 
  Suppose $A_1, A_2,\ldots, A_m $ are $d \times d$ matrices with $ \mathbb{B}\mathbb{C}$ entries. Then a bicomplex tupple  $(\lambda_1, \lambda_2, \ldots, \lambda_m)$ is said to be a joint eigenvalue of $(A_1, A_2,\ldots, A_m )$ if there exist a common non zero eigenvector $x \in \mathbb{B} \mathbb{C}^d$ such that $$ A_i x= \lambda_i x$$ for $i=1,\ldots,m.$ Using the idempotent decomposition of $$ A_i= A_i^ \prime e_1 + A_i^{ \prime \prime}e_2 $$ and $$ x= x_1 e_1 + x_2 e_2,\; \lambda_i= \mu_i e_1 + \gamma_i e_2 $$ we have 
  the following relations $$  A_i^ \prime x_1 = \mu_i  x_1,\;  A_i^ {\prime \prime} x_2 = \gamma_i  x_2, $$
  for $i=1,\ldots,m $ and $ A_i^\prime, A_i^{\prime \prime}$ are complex matrices. Since $x\neq 0$, this implies $x_1 \neq0$ or $ x_2 \neq 0$. Suppose $x_1 \neq 0.$ Setting $x_2=0 $ any $ \gamma_i \in \mathbb{C}$ can be a joint eigenvalue of $(A_1,A_2,\ldots,A_m)$ So we have the following 
  \begin{theorem}
  	$$ \sigma_p(A_1,A_2,\ldots,A_m) = \sigma_p(A_1^\prime,A_2^\prime,\ldots,A_m^\prime)e_1+ \mathbb{C}e_2 \bigcup  \mathbb{C}e_1+  \sigma_p(A_1^{\prime \prime},A_2^{\prime\prime},\ldots,A_m^{\prime\prime})e_2,$$ where
  	$\sigma_p(A_1,A_2,\ldots,A_m)$ denotes the set of joint eigenvalues of $(A_1,A_2,\ldots,A_m)$
  	\end{theorem}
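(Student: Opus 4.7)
The plan is to exploit the idempotent decomposition $\mathbb{BC} = \mathbb{C}(i)e_1 + \mathbb{C}(i)e_2$, together with the orthogonality relations $e_1 e_2 = 0$ and $e_j^2 = e_j$, to reduce a single bicomplex joint eigenvalue problem to two independent complex joint eigenvalue problems that can be coupled arbitrarily.

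First, I would fix the component notation already introduced in the paragraph preceding the theorem: write $A_i = A_i' e_1 + A_i'' e_2$, $x = x_1 e_1 + x_2 e_2$, and $\lambda_i = \mu_i e_1 + \gamma_i e_2$. Multiplying out $A_i x$ and $\lambda_i x$ and using $e_1 e_2 = 0$, $e_j^2 = e_j$, the eigenvalue relation $A_i x = \lambda_i x$ is equivalent to the simultaneous pair
\[
A_i' x_1 = \mu_i x_1, \qquad A_i'' x_2 = \gamma_i x_2, \qquad i=1,\ldots,m,
\]
where $A_i', A_i''$ are ordinary complex matrices and $x_1, x_2 \in \mathbb{C}^d$.

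Next I would prove the two inclusions separately. For $\subseteq$: given $\lambda = (\lambda_1,\ldots,\lambda_m) \in \sigma_p(A_1,\ldots,A_m)$ with nonzero common eigenvector $x$, the split relations above hold. Because $x \neq 0$, at least one of $x_1, x_2$ is nonzero. If $x_1 \neq 0$, then $(\mu_1,\ldots,\mu_m) \in \sigma_p(A_1',\ldots,A_m')$, while $(\gamma_1,\ldots,\gamma_m)$ is unconstrained (if $x_2 = 0$ the second family of equations is vacuous, and if $x_2 \neq 0$ the stronger conclusion $(\gamma_1,\ldots,\gamma_m) \in \sigma_p(A_1'',\ldots,A_m'')$ merely places $\lambda$ more specifically inside the first piece of the union). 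The case $x_2 \neq 0$ is handled symmetrically and places $\lambda$ in the second piece. For $\supseteq$: given, say, $(\mu_1,\ldots,\mu_m) \in \sigma_p(A_1',\ldots,A_m')$ with common eigenvector $x_1 \neq 0$ and arbitrary $(\gamma_1,\ldots,\gamma_m) \in \mathbb{C}^m$, I set $x_2 = 0$ and $x = x_1 e_1 \neq 0$. Then $A_i x = A_i' x_1 e_1 = \mu_i x_1 e_1 = (\mu_i e_1 + \gamma_i e_2)(x_1 e_1) = \lambda_i x$, showing $\lambda \in \sigma_p(A_1,\ldots,A_m)$. The other summand is analogous.

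The computations are routine bicomplex arithmetic; the only subtlety worth highlighting is the \emph{asymmetry} between a bicomplex vector being nonzero and its idempotent components being nonzero. Precisely because we may have $x \neq 0$ with, say, $x_2 = 0$, the component $\gamma_i$ is left completely free, which is what produces the extra $\mathbb{C}e_2$ (respectively $\mathbb{C}e_1$) factor in the union and is the structural reason why $\sigma_p$ fails to be a bounded subset of $\mathbb{BC}^m$. This is the only place where a careless argument might produce the wrong (smaller) product-type description, so it is the step I would take most care to justify.
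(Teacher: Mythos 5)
Your proposal is correct and follows essentially the same route as the paper: the idempotent decomposition $A_i = A_i'e_1 + A_i''e_2$, $x = x_1e_1 + x_2e_2$, $\lambda_i = \mu_i e_1 + \gamma_i e_2$ splitting $A_i x = \lambda_i x$ into the two complex systems $A_i'x_1 = \mu_i x_1$, $A_i''x_2 = \gamma_i x_2$, with the observation that $x \neq 0$ only forces one of $x_1, x_2$ to be nonzero, leaving the other component of $\lambda_i$ free. Your write-up is in fact somewhat more complete than the paper's, since you verify both inclusions explicitly rather than only sketching the forward direction.
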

  \begin{remark}
  The matrix tupple is commuting if $A_i A_j= A_j A_i$ for $i,j=1,\ldots,m.$ It can be easily noted that if $(A_1,A_2,\ldots,A_m)$ is commuting matrix tupple with bi complex entries iff 
 $ (A_1^\prime,A_2^\prime,\ldots,A_m^\prime)$ and $(A_1^{\prime \prime},A_2^{\prime\prime},\ldots,A_m^{\prime\prime})$ are commuting pairs of matrix tupples with complex entries. 
 \end{remark}
\begin{theorem}
		$ \sigma_p(A_1,A_2,\ldots,A_m) \neq \phi $ if $ (A_1,A_2,\ldots, A_m) $ is a commuting pair of matrices with bi complex entries.
\end{theorem}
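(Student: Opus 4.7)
The plan is to reduce the claim to the well-known fact about commuting tuples of complex matrices via the idempotent decomposition already exploited in Theorem~2.1 and Remark~2.2. By Remark~2.2, if $(A_1,\dots,A_m)$ is a commuting tuple of matrices with bicomplex entries, then $(A_1',\dots,A_m')$ and $(A_1'',\dots,A_m'')$ are each commuting tuples of ordinary $d\times d$ complex matrices. Therefore it suffices to show that $\sigma_p(A_1',\dots,A_m')\neq\emptyset$ (the other half of the union in Theorem~2.1 would give the same conclusion), for then Theorem~2.1 immediately yields a joint eigenvalue of the bicomplex tuple.

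For the complex part I would invoke the classical lemma that any commuting family of matrices over an algebraically closed field possesses a common eigenvector. Since it is a cornerstone of the proof, let me indicate how I would include its short argument rather than cite it as a black box. I would proceed by induction on $m$. For $m=1$, existence of an eigenvalue is the fundamental theorem of algebra applied to the characteristic polynomial of $A_1'$. Assuming the result for $m-1$ commuting matrices, take a common non-zero eigenvector $x_0$ of $(A_1',\dots,A_{m-1}')$ with joint eigenvalue $(\mu_1,\dots,\mu_{m-1})$, and consider the joint eigenspace
\[
E=\{x\in\mathbb{C}^d : A_i'x=\mu_i x,\ i=1,\dots,m-1\}.
\]
The key step is to observe that $E$ is invariant under $A_m'$: for $x\in E$, commutativity gives $A_i'(A_m'x)=A_m'(A_i'x)=\mu_i A_m'x$, so $A_m'x\in E$. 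Since $E$ is a non-zero finite-dimensional complex subspace, $A_m'|_E$ admits an eigenvalue $\mu_m\in\mathbb{C}$ and a corresponding non-zero eigenvector $x\in E$, which is then a common eigenvector of the full tuple with joint eigenvalue $(\mu_1,\dots,\mu_m)$.

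Combining the two pieces: commutativity of the bicomplex tuple gives commutativity of $(A_1',\dots,A_m')$ by Remark~2.2, the induction above supplies $(\mu_1,\dots,\mu_m)\in\sigma_p(A_1',\dots,A_m')$, and then by Theorem~2.1 the bicomplex tuple $(\mu_1 e_1+\gamma_1 e_2,\dots,\mu_m e_1+\gamma_m e_2)$ lies in $\sigma_p(A_1,\dots,A_m)$ for any choice of $\gamma_i\in\mathbb{C}$, which in particular is non-empty. I do not anticipate a real obstacle; the only subtle point is the invariance argument in the inductive step, which must be written carefully to keep track of the fact that $E$ is automatically non-zero at each stage because it contains the previously produced common eigenvector.
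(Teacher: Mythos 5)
Your proposal is correct and follows essentially the same route as the paper: reduce via Remark~2.2 and Theorem~2.1 to the statement that a commuting tuple of complex matrices has non-empty joint point spectrum. The only difference is that the paper cites this classical fact as known, whereas you supply its standard inductive proof (invariance of the joint eigenspace under $A_m'$), which is a harmless elaboration rather than a different argument.
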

\begin{proof}
	The proof follows from the above remark, the above theorem  and the fact that if $(C_1, C_2,\ldots, C_m)$ is a a commuting tuple of matrices with complex entries than the spectrum is non empty.
\end{proof}
\begin{remark}
	The joint point spectrum of $ (A_1,A_2,\ldots,A_m)$ is unbounded.
\end{remark}
There fore it makes to talk about the restricted spectrum.
\begin{definition}
	The restricted spectrum can be defined as 
		$$ \sigma_{r,p}(A_1,A_2,\ldots,A_m) = \sigma_p(A_1^\prime,A_2^\prime,\ldots,A_m^\prime)e_1+ \sigma_p(A_1^{\prime \prime},A_2^{\prime\prime},\ldots,A_m^{\prime\prime})e_2,$$ 
\end{definition}
Since $A_1^\prime,A_2^\prime,\ldots,A_m^\prime$ and $ A_1^{\prime \prime},A_2^{\prime\prime},\ldots,A_m^{\prime\prime}$ there are unitary matrices $U^\prime$ and $ U^{\prime \prime}$ such that 
$$  U^\prime A_j^\prime (U^{\prime})^*$$ and $$  U^{\prime \prime }A_j^{\prime \prime} {U^{\prime \prime}}^*$$ are upper triangular matrices. 
Defining $$ V = e_1 U^\prime + e_2 U^{\prime \prime} $$ which is a unitary matrix and $VA_jV^*$ for $j=1,\ldots,m$  is an upper triangular bi complex matrix with diagonal elements of $\sigma_{r,p}(A_1,A_2,\ldots,A_m)= \left \lbrace (\lambda_i^1, \ldots, \lambda_i^m ) | i=1,\ldots,d \right \rbrace,\;k=1,\ldots,m. $ 
Define for $\lambda_i=  (\lambda_i^1, \ldots, \lambda_i^m ),$ The norm $$ \|\lambda_i\|_p ^2= \frac{ (\displaystyle \sum_{k=1}^m |\mu_i^k|^p)^{1/p} + ( \displaystyle \sum_{k=1}^m |\gamma_i^k|^p)^{1/p}}{2}$$
where $$ \lambda_i^k = \mu_i^k e_1 + \gamma_i^k e_2 $$
The Geometric spectral radius of the tuple $T=(A_1, A_2, \ldots, A_m)=e_1T_1 + e_2 T_2$ is defined as 
$$ r_p(T) = \max \lbrace \|\lambda\|_p : \lambda \in \sigma_{r,p}(A_1,A_2,\ldots,A_m) \rbrace$$
Certainly we have $$ \|\mu_i\|_p \leq r_p(T_1) \;and \; \|\gamma_i\|_p \leq r_p(T_2)$$
So that 
$$ \|\lambda_i\|_p ^2\leq \frac{r_p(T_1) + r_p(T_2)}{2} $$
As a result
$$ \|\lambda_i \|_p \leq \sqrt{ \frac{r_p(T_1) + r_p(T_2)}{2} }\leq\sqrt{\frac{ \|T_1\|_p + \|T_2\|_p}{2}} =\|T\|_p$$
and hence 
$$ r_p(T) \leq \|T\|_p,$$
where $$\|T_1\|_p= \sup_{\|x_1\|_p=1} (\sum_{j=1}^m \|A_j^\prime(x_1)\|_p)^\frac{1}{p}$$
and $$\|T_2\|_p= \sup_{\|x_2\|_p=1} (\sum_{j=1}^m \|A_j^{\prime\prime}(x_2)\|_p)^\frac{1}{p}$$
\section{Joint spectrum of a pair of operator tuple}
To talk about that $ z $ is in the resolvent set of $T$ a bounded linear operator on a Hilbert space $H$, we require that $T-zI$ is invertible and bounded linear.
This implies that the short exact sequence 
$$ {0} \rightarrow H \rightarrow H \rightarrow {0} $$ is exact. That is ${0}=ker(T-zI)$ and $Im(T-zI)=H $. Like wise a bicomplex number $z=e_1 z_1 + e_2 z_2$ is in the resolvent of $T=e_1T_1 + e_2 T_2$ if the two short exact sequences $$ {0} \rightarrow e_1H \rightarrow e_1H \rightarrow {0} $$ and $$  {0} \rightarrow e_2H \rightarrow e_2H \rightarrow {0} $$

Now we want to extend the notion of spectrum of joint spectrum of an operator tupple $(T_1, T_2)$ where $T_1$ and $T_2$ are bounded operators which commute on a bi-complex Hilbert space $H$.
We know that the operators $T_1$ and $T_2$ can be written in its idempotent decomposition. That is 
$$ T_1= e_1 T_1^\prime + e_2 T_1^{\prime\prime} $$
and
$$ T_2= e_1 T_2^\prime + e_2 T_2^{\prime\prime} $$
Consider a matrix of operators
$\begin{bmatrix}
	T_1 & T_2 \\
	-T_2^*& T_1^*
\end{bmatrix}$ The above matrix of operators can be decomposed as 
$\begin{bmatrix}
T_1 & T_2 \\
-T_2^*& T_1^*
\end{bmatrix}= 
e_1 \begin{bmatrix}
T_1^\prime  & T_2^\prime  \\
-T_2^{\prime *}& T_1^{\prime *}
\end{bmatrix}
+ e_2 \begin{bmatrix}
T_1^{\prime \prime}  & T_2^{\prime \prime} \\
-T_2^{\prime \prime  *}& T_1^{\prime \prime *}
\end{bmatrix}.
$\\
The second and the third matrix of operators act on $ H \oplus H$ and more specifically on $ e_1H \oplus e_1H$ and $ e_2H \oplus e_2H$ respectively.

Let $(z_1,z_2) \in \mathbb{BC} \times \mathbb{BC}.$ The idempotent decomposition of $z_1 = z_1^\prime e_1 + z_1^{\prime \prime} e_2 $ and 
$z_2 = z_2^\prime e_1 + z_2^{\prime \prime} e_2.$\\
$\begin{bmatrix}
z_1I -T_1 & z_2I- T_2 \\
-z_2^*I + T_2^*&z_1^*I-T_1^*
\end{bmatrix}= 
e_1 \begin{bmatrix}
z_1^\prime I- T_1^\prime  & z_2^\prime I- T_2^\prime  \\
-z_2^{\prime *}I  +T_2^{\prime *}& z_1^{\prime *} I -T_1^{\prime *}
\end{bmatrix}
+ e_2 \begin{bmatrix}
z_1^{\prime \prime} I - T_1^{\prime \prime}  & z_2^{\prime \prime}I -T_2^{\prime \prime} \\
-z_2^{\prime \prime*} I+T_2^{\prime \prime  *}& z_1^{\prime \prime*}I- T_1^{\prime \prime *}
\end{bmatrix}
$
Following corollary 2.2 of \cite{Vas} we propose the following definition of the spectrum of a commuting operator tuple $(T_1, T_2)$ . Note that
using the above decomposition, the matrix $\begin{bmatrix}
z_1I -T_1 & z_2I- T_2 \\
-z_2^*I + T_2^*&z_1^*I-T_1^*
\end{bmatrix} $ is invertible iff the matrices of operators $ \begin{bmatrix}
z_1^\prime I- T_1^\prime  & z_2^\prime I- T_2^\prime  \\
-z_2^{\prime *}I  +T_2^{\prime *}& z_1^{\prime *} I -T_1^{\prime *}
\end{bmatrix}$  and $ 
 \begin{bmatrix}
	z_1^{\prime \prime} I - T_1^{\prime \prime}  & z_2^{\prime \prime}I -T_2^{\prime \prime} \\
	-z_2^{\prime \prime*} I+T_2^{\prime \prime  *}& z_1^{\prime \prime*}I- T_1^{\prime \prime *}
\end{bmatrix}
$\\
This implies that the  matrix $\begin{bmatrix}
z_1I -T_1 & z_2I- T_2 \\
-z_2^*I + T_2^*&z_1^*I-T_1^*
\end{bmatrix} $ is not invertible  iff atleast one of the matrices of operators $ \begin{bmatrix}
z_1^\prime I- T_1^\prime  & z_2^\prime I- T_2^\prime  \\
-z_2^{\prime *}I  +T_2^{\prime *}& z_1^{\prime *} I -T_1^{\prime *}
\end{bmatrix}$  and $ 
\begin{bmatrix}
z_1^{\prime \prime} I - T_1^{\prime \prime}  & z_2^{\prime \prime}I -T_2^{\prime \prime} \\
-z_2^{\prime \prime*} I+T_2^{\prime \prime  *}& z_1^{\prime \prime*}I- T_1^{\prime \prime *}
\end{bmatrix}
$ is not invertible. So we have the following.
\begin{definition}
T $ (\lambda_1 , \lambda_2) $ is in the joint spectrum of $(T_1,T_2)$ iff atleast one of the operator matrices
 $ \begin{bmatrix}
\lambda_1^\prime I- T_1^\prime  & \lambda_2^\prime I- T_2^\prime  \\
-\lambda_2^{\prime *}I  +T_2^{\prime *}& \lambda_1^{\prime *} I -T_1^{\prime *}
\end{bmatrix}$  and $ 
\begin{bmatrix}
\lambda_1^{\prime \prime} I - T_1^{\prime \prime}  & \lambda_2^{\prime \prime}I -T_2^{\prime \prime} \\
-\lambda_2^{\prime \prime*} I+T_2^{\prime \prime  *}& \lambda_1^{\prime \prime*}I- T_1^{\prime \prime *}
\end{bmatrix}
$ is not invertible.
\end{definition}
Thus $ (\lambda_1 , \lambda_2) $ is in the joint spectrum of $(T_1,T_2)$ iff  $ (\lambda_1^\prime  , \lambda_2^\prime ) $ is in the joint spectrum of $(T_1^\prime,T_2^\prime)$ or 
$ (\lambda_1^{\prime\prime} , \lambda_2^{\prime \prime} ) $ is in the joint spectrum of $(T_1^{\prime \prime},T_2^{\prime \prime}).$
Here we assume that $H$ is a Hilbert $\mathbb{BC}$ module endowed with the norm $\|.\|$ . Here we consider the norm $$ \|.\|= \frac{1}{\sqrt{2}}( \|.\|_{H_1}^2+ \|.\|_{H_2}^2)$$ where the
$\|.\|_{H_1}$ and $\|.\|_{H_2}$ are induced from the inner products on $H_1$ and $H_2$ treated as complex inner product spaces. It is a real norm.
The pair $(\lambda_1, \lambda_2)$ is in the approximate point spectrum of $(T_1,T_2)$ iff there exist sequence of unit vectors $\|x_n\|$ such that 
$$\|(T_1- \lambda_1 I)x_n\|, \; \|(T_2- \lambda_2 I)x_n\|$$ both tend to $0.$
$\|x_n=x_n^\prime e_1+ x_n^{\prime \prime}e_2\|=1 $. It could happen that  $ \|x_n^\prime\|_{H_1}=\sqrt{2}$ and   $ \|x_n^{\prime \prime}\|_{H_2}=0$ 
Define $y_n = \frac {x_n^\prime }{\sqrt{2}},\; \|y_n\|_{H_1}=1$.
$$\|(T_1- \lambda_1 I)x_n\| \rightarrow 0,\;\|(T_2- \lambda_2 I)x_n\|\rightarrow 0$$ implies 
$$\|(T_1^\prime - \lambda_1^\prime  I)y_n^\prime \|_{H_1} \rightarrow 0$$ and 
$$\|(T_1^{\prime \prime} - \lambda_1^{\prime \prime}  I)x_n^{\prime \prime}\|_{H_2} \rightarrow 0$$ 
Since $x_n^{\prime \prime}=0$, $\lambda_1^{\prime\prime}$ can be any complex number. Also 
$$\|(T_2^\prime - \lambda_2^\prime  I)y_n^\prime \|_{H_1} \rightarrow 0$$ and 
$$\|(T_2^{\prime \prime} - \lambda_2^{\prime \prime}  I)x_n^{\prime \prime}\|_{H_2} \rightarrow 0.$$ 
By the same reason, $\lambda_2^{\prime \prime}$ can be anything.
The above argument shows that $ \sigma_{ap}(T_1,T_2) \subset \sigma_{ap}(T_1^\prime, T_2^\prime) e_1 +( \mathbb{C} \times \mathbb{C}) e_2 \bigcup ( \mathbb{C} \times \mathbb{C}) e_1+ \sigma_{ap}(T_1^{\prime \prime}, T_2^{\prime \prime})e_2 .$
As observed before, in the case of matrices, we have similar results from Dash \cite{Dash}.
\begin{theorem}
	If $(T_1,T_2)$ is a commuting pair of bounded operator tupple, 
	$$ \sigma_p(T_1,T_2) = \sigma_p(T_1^\prime,T_2^\prime)e_1+ ( \mathbb{C} \times \mathbb{C})e_2 \bigcup  ( \mathbb{C} \times \mathbb{C})e_1+  \sigma_p(T_1^{\prime \prime},T_2^{\prime\prime})e_2,$$ where
	$\sigma_p(T_1,T_2)$ denotes the set of joint eigenvalues of $(T_1,T_2)$
\end{theorem}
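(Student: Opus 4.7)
The plan is to mimic the argument already given for matrices in Theorem~2.1, since the joint point spectrum is defined purely by the existence of a common non-zero eigenvector and does not involve any finite-dimensional features. So the structural part of the proof will be identical, and the only new content is checking that the idempotent decomposition of a vector in $H$ behaves as expected under a bounded $\mathbb{BC}$-linear operator.

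First I would unpack the forward inclusion. Suppose $(\lambda_1,\lambda_2)\in\sigma_p(T_1,T_2)$, so there is a non-zero $x\in H$ with $T_ix=\lambda_ix$ for $i=1,2$. Write $x=e_1x_1+e_2x_2$ with $x_j\in e_jH$, $T_i=e_1T_i'+e_2T_i''$, and $\lambda_i=\mu_ie_1+\gamma_ie_2$. Using $e_1e_2=0$, $e_1^2=e_1$, $e_2^2=e_2$ and the $\mathbb{BC}$-linearity of $T_i$, the equation $T_ix=\lambda_ix$ splits as
\begin{equation*}
T_i'x_1=\mu_ix_1,\qquad T_i''x_2=\gamma_ix_2,\qquad i=1,2.
\end{equation*}
Since $x\neq0$ at least one of $x_1,x_2$ is non-zero. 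If $x_1\neq 0$, then $(\mu_1,\mu_2)\in\sigma_p(T_1',T_2')$ and $(\gamma_1,\gamma_2)$ is unconstrained, so $(\lambda_1,\lambda_2)$ belongs to $\sigma_p(T_1',T_2')e_1+(\mathbb{C}\times\mathbb{C})e_2$. Symmetrically, if $x_2\neq 0$ it lies in the other summand.

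For the reverse inclusion I would argue componentwise. Take $(\mu_1,\mu_2)\in\sigma_p(T_1',T_2')$ with a common non-zero eigenvector $x_1\in e_1H$, and choose arbitrary $(\gamma_1,\gamma_2)\in\mathbb{C}\times\mathbb{C}$. Set $x=e_1x_1\neq 0$ and $\lambda_i=\mu_ie_1+\gamma_ie_2$. Then
\begin{equation*}
T_ix=(e_1T_i'+e_2T_i'')(e_1x_1)=e_1T_i'x_1=e_1\mu_ix_1=(\mu_ie_1+\gamma_ie_2)(e_1x_1)=\lambda_ix,
\end{equation*}
so $(\lambda_1,\lambda_2)\in\sigma_p(T_1,T_2)$. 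The argument for the other summand is the mirror image, taking $x=e_2x_2$.

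I do not expect a real obstacle: commutativity of $(T_1,T_2)$ (which by Remark~2.2 style reasoning is equivalent to commutativity of the two complex pairs $(T_1',T_2')$ and $(T_1'',T_2'')$) is used only implicitly to make the hypothesis meaningful, and no finite-dimensional or Fredholm argument is needed because we are working with honest eigenvectors, not with invertibility of the Vasilescu matrix. The one place to be slightly careful is that $x_1$ really lives in $e_1H$ and is $\mathbb{BC}$-linearly what a ``complex vector in $e_1H$'' should be; this is immediate once one observes that $e_1H$ is a $\mathbb{C}(i)$-Hilbert space and the restricted operators $T_i'$ act $\mathbb{C}(i)$-linearly on it, which is exactly the setup in which $\sigma_p(T_1',T_2')$ was defined.
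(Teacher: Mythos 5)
Your argument is correct and is essentially the same as the paper's: the paper proves the matrix version (Theorem 2.1) by exactly this idempotent splitting of the eigenvalue equation plus the observation that the other component can be chosen zero, and it states the operator version as following in the same way. Your write-up simply fills in that routine transfer, including the reverse inclusion via eigenvectors of the form $e_1x_1$ or $e_2x_2$, which is what the paper intends.
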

\begin{theorem}
	If $(T_1,T_2)$ is a commuting pair of bounded operator tupple, 
	$$ \sigma_r(T_1,T_2) = \overline{ \sigma_p((T_1^\prime)^*,(T_2^\prime)^*)}e_1+ ( \mathbb{C} \times \mathbb{C})e_2 \bigcup  ( \mathbb{C} \times \mathbb{C})e_1+  \overline{\sigma_p((T_1^{\prime \prime})^*,(T_2^{\prime\prime})^*)}e_2,$$ where
	$\sigma_r(T_1,T_2)$ denotes the set of residual spectrum  of $(T_1,T_2)$
\end{theorem}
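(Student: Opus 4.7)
The proof plan parallels that of the preceding theorem on $\sigma_p(T_1,T_2)$ and hinges on the idempotent decomposition of the Vasilescu-type operator matrix
\[
M(\lambda_1,\lambda_2) \;=\; e_1\, M'(\lambda_1',\lambda_2') \;+\; e_2\, M''(\lambda_1'',\lambda_2''),
\]
constructed just before Definition~3.1. The first step is to recast the bicomplex residual spectrum condition ``$(\lambda_1,\lambda_2)\in\sigma_r(T_1,T_2)$'' as ``$M(\lambda_1,\lambda_2)$ is in the spectrum and has non-dense range''. Via the orthogonal splitting $H = e_1 H \oplus e_2 H$, the range of $M$ decomposes as the direct sum of the ranges of $M'$ and $M''$ on the respective complex Hilbert summands, so $\overline{\mathrm{Range}(M)} \neq H\oplus H$ iff at least one of $\overline{\mathrm{Range}(M')} \neq e_1H\oplus e_1 H$ or $\overline{\mathrm{Range}(M'')} \neq e_2 H\oplus e_2 H$.

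The second step invokes the classical complex-Hilbert-space identity $\overline{\mathrm{Range}(M')}^{\perp} = \ker((M')^{*})$, which for the Vasilescu matrix of a commuting complex pair $(S_1,S_2)$ yields the characterization
\[
\sigma_r(S_1,S_2) \;=\; \overline{\sigma_p(S_1^*,S_2^*)},
\]
with the overline denoting componentwise complex conjugation; this is the result from Dash~\cite{Dash} referenced in the statement. Applying it to the primed pair turns the non-dense-range condition on $M'$ into $(\lambda_1',\lambda_2') \in \overline{\sigma_p((T_1')^*,(T_2')^*)}$, and symmetrically for $M''$.

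The third step is to assemble these two cases into the union formula. When residuality is already witnessed by $M'$ having non-dense range, the $e_2H$-component is wholly unconstrained: any choice of $(\lambda_1'',\lambda_2'')\in\mathbb{C}\times\mathbb{C}$ leaves the range of $M$ non-dense, because $\overline{\mathrm{Range}(M)}$ still misses the orthogonal complement of $\overline{\mathrm{Range}(M')}$ inside $e_1H\oplus e_1 H$. The symmetric argument applies to the double-primed sector, producing the union of the two ``one-sector-residual, other-free'' sets described in the theorem. The main technical obstacle is to verify that the orthogonality of $e_1H$ and $e_2 H$ under the hyperbolic-style norm $\|\cdot\| = \tfrac{1}{\sqrt{2}}\bigl(\|\cdot\|_{H_1}^2+\|\cdot\|_{H_2}^2\bigr)^{1/2}$ introduced in the previous subsection genuinely decouples the closure of the range of $M$ across the two idempotent summands; once this is secured, the theorem follows by the same disjunctive case analysis used earlier for $\sigma_{ap}(T_1,T_2)$ and $\sigma_p(T_1,T_2)$.
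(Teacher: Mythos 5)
The paper states this theorem without any proof (it is offered as a consequence of the preceding $\sigma_p$ theorem together with Dash's complex-case results), so your proposal must stand on its own, and its central recasting is where it breaks. You replace the condition ``$(\lambda_1,\lambda_2)\in\sigma_r(T_1,T_2)$'' by ``the Vasilescu matrix $M(\lambda_1,\lambda_2)$ has non-dense range,'' and then claim that $\overline{\mathrm{Range}(M')}^{\perp}=\ker((M')^{*})$ yields $\sigma_r(S_1,S_2)=\overline{\sigma_p(S_1^*,S_2^*)}$. Neither step is right. Writing $A=\lambda_1 I-T_1$, $B=\lambda_2 I-T_2$, one has $M^{*}(u,v)=\bigl(A^{*}u-Bv,\;B^{*}u+Av\bigr)$, and for a commuting pair the identity $\langle u,(AB-BA)v\rangle=\|Bv\|^{2}+\|Av\|^{2}=0$ forces $\ker M^{*}=\bigl(\ker A^{*}\cap\ker B^{*}\bigr)\oplus\bigl(\ker A\cap\ker B\bigr)$. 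Hence ``$\mathrm{Range}(M)$ not dense'' characterizes $\sigma_p(T_1,T_2)\cup\overline{\sigma_p(T_1^{*},T_2^{*})}$, not the residual spectrum: e.g.\ for $T_1$ the backward shift, $T_2=0$, $(\lambda_1,\lambda_2)=(0,0)$, the joint range $(T_1-\lambda_1)H+(T_2-\lambda_2)H=H$ is dense (so the point is not residual), yet $\ker M^{*}\neq\{0\}$ because $(0,0)$ is a joint eigenvalue of the pair itself. So your step 2 is a non sequitur ($\ker$ of the matrix adjoint is not a common kernel of the two adjoints without the splitting argument above), and your step 1 identifies the wrong set.

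The argument the theorem actually needs bypasses $M$ entirely: take the residual (compression-type) condition directly on the pair, namely $\overline{(T_1-\lambda_1)H+(T_2-\lambda_2)H}\neq H$, use the orthogonal-complement identity for this joint range to get a nonzero common null vector of $(T_1-\lambda_1)^{*}$ and $(T_2-\lambda_2)^{*}$, i.e.\ $(\lambda_1,\lambda_2)\in\sigma_r(T_1,T_2)$ iff the conjugate pair lies in $\sigma_p(T_1^{*},T_2^{*})$, and then apply the previous theorem's idempotent decomposition to the adjoint pair $T_i^{*}=e_1(T_i')^{*}+e_2(T_i'')^{*}$; the $(\mathbb{C}\times\mathbb{C})$ factors arise exactly as in the $\sigma_p$ case, from witnessing vectors whose $e_2$- (resp.\ $e_1$-) component vanishes. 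You should also pin down which definition of joint residual spectrum is in force: under the classical convention (non-dense joint range \emph{and} no common eigenvector) the stated equality requires deleting $\sigma_p(T_1,T_2)$, so the clean identity you quote from Dash only holds for the compression-spectrum convention. Your step 3 (the free $\mathbb{C}\times\mathbb{C}$ sectors and the $e_1H\oplus e_2H$ decoupling) is fine in spirit, but it is built on the flawed reduction to $M$.
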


\end{document}